\newtheorem{theorem}{Theorem}[section]
\newtheorem{lem}[theorem]{Lemma}
\newtheorem{obs}[theorem]{Observation}
\newtheorem{claim}[theorem]{Claim}
\newtheorem{thm}[theorem]{Theorem}
\newtheorem{problem}[theorem]{Problem}
\def\qed{\hfill \ifhmode\unskip\nobreak\fi\quad\ifmmode\Box\else$\Box$\fi\\ }
\begin{document}
\title{On the number of $r$-matchings in a Tree}

\author{Dong Yeap Kang, Jaehoon Kim, Younjin Kim and Hiu-Fai Law}
 
\address[Dong Yeap Kang]{Department of Mathematical Sciences, KAIST, 291 Daehak-ro Yuseong-gu Daejeon, 305-701 South Korea}
\address[Jaehoon Kim]{Department of Mathematical Sciences, KAIST, 291 Daehak-ro Yuseong-gu Daejeon, 305-701 South Korea / School of Mathematics, University of Birmingham, Edgbaston, Birmingham B15 2TT, United Kingdom}
\address[Younjin Kim]{Department of Mathematical Sciences, KAIST, 291 Daehak-ro Yuseong-gu Daejeon, 305-701 South Korea}
\address[Hiu-Fai Law]{Fachbereich Mathematik, Universit\"at Hamburg, Germany}
\email{dynamical@kaist.ac.kr}
\email{kimJS@bham.ac.uk, mutualteon@gmail.com}
\email{younjin@kaist.ac.kr}
\email{hiufai.law@gmail.com}
\thanks{ The first author and second author were partially supported by Basic Science Research Program through the National Research Foundation of Korea(NRF) funded by the Ministry of
Science, ICT \& Future Planning (2011-0011653). \\
The second author was also partially supported by the European Research Council under the European Union's Seventh Framework Programme (FP/2007--2013) / ERC Grant Agreements no. 306349 (J.~Kim). \\
The third author was supported by Basic Science Research Program through the National Research Foundation of Korea(NRF) funded by the Ministry of
Science, ICT \& Future Planning (2013R1A1A3010982).\\
Corresponding author: Younjin Kim}

\date{\today}
\begin{abstract}
An $r$-matching in a graph $G$ is a collection of edges in $G$ such that the distance between any two edges is at least $r$. A $2$-matching is also called an induced matching. In this paper, we estimate the maximum number of $r$-matchings in a tree of fixed order.  We also prove that the $n$-vertex path has the maximum number of induced matchings among all $n$-vertex trees.
 \end{abstract}

\maketitle
\section {Introduction}\label{intro}

A set of vertices in a graph $G$ is called {\it independent} if the set does not induce any edges. The number of independent sets in $G$, denoted by $i(G)$, was first studied by Prodinger and Tichy~\cite{PT82}, who proved that among trees of the same order, the star and the path attain the maximum and the minimum of the parameter, respectively. 
Since then many extremal results on $i(G)$ over various families of graphs have been obtained, and connections to mathematical physics also have been explored \cite{SS05}. A closely related parameter is the number of matchings in $G$. Another result proved in \cite{PT82} is that paths and stars, among trees of a fixed order, are the unique trees that attain maximum and minimum of the number of matchings, respectively.  \\

Independent sets and matchings can be generalized in the following way as proposed in \cite{AHK11}, \cite{Law12}. Given a graph $F$, an {\it $F$-matching} is a subgraph of $G$ whose components are isomorphic to $F$. An {\it induced $F$-matching} is an $F$-matching such that there are no induced edges between the copies of $F$. 
To generalize it, we define the distance between two edges $e,e'$ in $G$ as the minimum distance between two vertices $v,v'$ such that $v,v'$ are incident to $e,e'$, respectively. Thus two incident edges has distance $0$. Given an integer $r\geq 1$, let $ (F, r)${\it-matching} be a collection of copies of $F$ in $G$ such that the distance between any pair of copies of $F$ is at least $r$. Thus $F$-matchings are $(F, 1)$-matchings and induced $F$-matchings are $(F,2)$-matchings. Note that the empty set is also an $(F,r)$-matching for any $F$. Let $S_r(F,G)$ be a set of $(F, r)$-matchings and $$s_r(F, G) = |S_r(F, G)|.$$ To shorten the notations, we will refer to a $(K_2, r)$-matching as a \emph{$r$-matching} and a $(K_2,2)$-matching as \emph{an induced matching}. \\

We are interested in determining the maximum and minimum number of $r$-matchings in a tree of fixed order. The following observation shows that finding the minimum is easy. Hence, we focus on estimating the maximum number of $r$-matchings in a tree of fixed order. Let $\mathbf{T}_{n}$ denote a family of all trees with $n-1$ edges and $n$ vertices, and $P_n$ denote an $n$-vertex path.
\begin{obs}\label{observation}
For $T \in \mathbf{T}_{n}$,  we have $\displaystyle \min_{T\in \mathbf{T}_{n}}s_r(K_2,T) = n$, and the equality only holds for trees $T$ with diameter at most $r+1$.
\end{obs}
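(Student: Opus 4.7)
The plan is to establish the lower bound by exhibiting $n$ trivial $r$-matchings, and then to characterize equality purely in terms of the diameter of $T$.

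First, I would observe that the empty set is an $r$-matching (as noted in the introduction), and each singleton $\{e\}$ with $e \in E(T)$ is also an $r$-matching, since the pairwise distance condition is vacuous when the collection has only one edge. A tree on $n$ vertices has $n-1$ edges, so this already gives $s_r(K_2, T) \geq 1 + (n-1) = n$.

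To analyze the equality case, note that any $r$-matching of size at least $2$ contains a $2$-element sub-$r$-matching, so equality in the bound is equivalent to the statement that every pair of distinct edges of $T$ is at distance strictly less than $r$. I would then relate this condition to the diameter as follows. On one hand, if edges $e = xy$ and $e' = zw$ realize distance $k$, witnessed by $d(y,z) = k$, then the triangle inequality gives $d(x,w) \leq 1 + k + 1 = k + 2$, so $\mathrm{diam}(T) \geq k + 2$. On the other hand, if $\mathrm{diam}(T) = d \geq 2$ and $v_0 v_1 \cdots v_d$ is a diameter-realizing geodesic, then the end-edges $v_0 v_1$ and $v_{d-1} v_d$ are at distance $d(v_1, v_{d-1}) = d - 2$. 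Combining both directions, the maximum pairwise edge distance in $T$ is exactly $\mathrm{diam}(T) - 2$, so equality in $s_r(K_2, T) \geq n$ holds precisely when $\mathrm{diam}(T) - 2 < r$, that is, $\mathrm{diam}(T) \leq r+1$.

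Since the argument reduces to a short distance computation, I expect no substantive obstacle. The only care needed is in the degenerate cases $\mathrm{diam}(T) \in \{0,1\}$, where the ``max edge distance equals diameter minus two'' formula is vacuous (there are fewer than two edges) but the conclusion $s_r(K_2,T) = n$ still holds trivially, and in remembering the convention that two incident edges are at distance $0$ rather than $1$, so that any two edges sharing a vertex are automatically excluded from a $2$-element $r$-matching whenever $r \geq 1$.
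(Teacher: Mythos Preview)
Your approach matches the paper's: exhibit the $n$ trivial $r$-matchings (empty set plus $n-1$ singletons) for the lower bound, then argue that equality holds exactly when no two edges lie at distance $\geq r$, and translate this into the diameter condition. The paper does the last step in one sentence; you spell it out via the formula ``maximum edge distance $=\mathrm{diam}(T)-2$''.

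There is one logical slip in your write-up. From the triangle inequality you obtain $d(x,w)\leq k+2$, but an \emph{upper} bound on $d(x,w)$ does not yield $\mathrm{diam}(T)\geq k+2$. What you actually need is that in a tree $d(x,w)=k+2$: since $d(y,z)=k$ is the minimum among the four cross-distances, neither $x$ nor $w$ can lie on the unique $y$--$z$ path (otherwise one of $d(x,z)$, $d(y,w)$ would drop to $k-1$), so the unique $x$--$w$ path is $x,y,\ldots,z,w$ and has length exactly $k+2$. With that fix the argument is complete.
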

\begin{proof}
Since the empty set and sets of one edge are always $r$-matchings,  we have $\displaystyle \min_{T\in \mathbf{T}_{n}}s_r(K_2,T) = n$. 
It is obvious that a tree $T$ has exactly $n$ $r$-matchings if the diameter of $T$ is at most $r+1$. On the other hand, a tree $T$ satisfying the equality must have no two edges with distance at least $r$. Thus the tree $T$ with the equality must have diameter at most $r+1$.\end{proof}

In Section \ref{path}, we approximate the number of $r$-matchings in  $P_n$. In Section \ref{induced}, we prove that the path $P_n$ contains the most number of induced matchings among all trees with $n$ vertices. In Section \ref{big r}, we prove that 
$$  (e^{-\frac{6}{r^2}}\alpha_r)^{n-1} \leq \max_{T \in \mathbf{T}_{n}} s_r(K_2,T) \leq (e^{\frac{1}{r}}\alpha_r)^{n-1} $$ for large $r$ with
$\alpha_r = (s+1)^{\frac{1}{r/2+s}} = e^{\frac{1}{s+1}}$, where $s$ is a real number satisfying $r/2+s = (s+1)\log(s+1)$. We also prove that there are $n$-vertex trees having more $r$-matchings than $P_n$ when $r\in \mathbb{N} \backslash \{1,2,3,4,5,7,9\}$.

\section{ The number of $r$-matchings in $P_{n}$. } \label{path}
In this section, we discuss the asymptotic behavior of $s_r(K_2, P_n)$ as $n$ $\to \infty$. For the sake of formality, we let $s_r(K_2,P_{n})=1$ for $n = 0$.
Take an end edge $e$ and count all $r$-matchings containing $e$ and $r$-matchings not containing $e$. Then we have the following recurrence relations about $s_r (K_2 , P_{n} )$: \\
\begin{equation} 
\label{initial values}
s_r (K_2 , P_{n} ) = n \text{ for } 1 \leq n \leq r,
\end{equation}
\begin{equation} \label{recurrence}
s_r (K_2 , P_{n} )  = s_r (K_2 , P_{n-1} )  + s_r (K_2 , P_{n-r-1} ) \text{ for } n \geq r+1.
\end{equation} \\
\noindent Let $p_r(x) = x^{r+1} - x^{r} - 1 \in \mathbb{C}[x]$ be the characteristic polynomial for the recurrence relation (\ref{recurrence}). Since $p_r(x)$ and its derivative $p'_r(x)$ do not have any common root in $\mathbb{C}$, the polynomial $p_r(x)$ is separable. Let $q_1 , q_2 , ... , q_{r+1}$ be $r+1$ distinct roots of $p_r(x)$ in $\mathbb{C}$, where $|q_1| \geq |q_2| \geq ... \geq |q_{r+1}|$. In this paper, we let log denote the natural logarithm. \\

\begin{thm}\label{lem:lemma1} The polynomial $p_r(x)$ has an unique positive real zero $1 < \beta_r < 2$ which satisfies the following: there exists a constant  $C_r >0 $ such that 
$$C_r = \lim_{n \to \infty}{\frac{s_r(K_2,P_n)}{\beta_r^{n-1}}} = \frac{\beta_r^{2r}}{\beta_r^{r} + (r+1)} = (1+o(1))\frac{r}{(\log{r})^2}.$$\end{thm}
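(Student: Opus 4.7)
My plan is to analyze the characteristic polynomial $p_r$, identify $\beta_r$ as its unique root of largest modulus, apply standard linear-recurrence theory to extract the limit, and finish with an implicit-function estimate of $\beta_r$ for large $r$. The first step is to pin down $\beta_r$: since $p_r(1) = -1 < 0$ and $p_r(2) = 2^r - 1 > 0$, the intermediate value theorem supplies a root in $(1,2)$; the factorization $p_r'(x) = x^{r-1}\bigl((r+1)x - r\bigr)$ shows that $p_r$ has a single critical point on $(0,\infty)$ at $x_0 = r/(r+1) < 1$, so $\beta_r$ is the only positive real zero. To see that $\beta_r$ is dominant, note that for every root $q$ of $p_r$ one has $|q|^{r+1} = |q^r + 1| \leq |q|^r + 1$, i.e.\ $p_r(|q|) \leq 0$, so the monotonicity above forces $|q| \leq \beta_r$; equality requires the triangle inequality to be tight, which aligns $q^r$ with the positive real axis and, together with $|q^r| = \beta_r^r$, gives $q = \beta_r$. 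Hence $|q_i| < \beta_r$ for all $i \geq 2$.

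Because $p_r$ is separable and $\beta_r$ is strictly dominant, the standard solution of the recurrence (\ref{recurrence}) yields $s_r(K_2,P_n) = \sum_{i=1}^{r+1} c_i q_i^n$ for suitable constants $c_i$ determined by (\ref{initial values}), so the limit in the theorem exists and equals $c_1 \beta_r$. To produce the explicit closed form for $C_r$, I would turn to the generating function
\[
A(x) = \sum_{n \geq 0} s_r(K_2,P_n)\, x^n = \frac{1 - x + x^2 - x^{r+1}}{(1-x)(1 - x - x^{r+1})},
\]
assembled by splicing (\ref{initial values}) into (\ref{recurrence}). Its dominant singularity is the simple pole at $x = 1/\beta_r$, and the identity $x^{r+1} = 1 - x$ at that point collapses the numerator, so that a routine residue computation produces the formula for $C_r$ stated in the theorem.

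For the asymptotic $C_r = (1+o(1))\, r/(\log r)^2$, I would exploit the identity $\beta_r^r(\beta_r - 1) = 1$. Writing $u_r = \beta_r - 1 \to 0^+$ and taking logarithms gives $r \log(1 + u_r) = \log(1/u_r)$; a short bootstrap (feed $u_r \approx (\log r)/r$ back into the right-hand side) yields $u_r = (1+o(1))(\log r)/r$, so $\beta_r^r = 1/u_r \sim r/\log r$ and the closed form collapses to $(\beta_r^r)^2/(\beta_r^r + r + 1) \sim r/(\log r)^2$. The main obstacle I expect is the dominance step: the triangle inequality half is routine, but the equality analysis that upgrades $|q_i| \leq \beta_r$ to \emph{strict} $|q_i| < \beta_r$ for $i \geq 2$ (so that $\beta_r$ is alone on the critical circle and the remainder in the recurrence solution truly decays geometrically) is where the separability of $p_r$ is used essentially.
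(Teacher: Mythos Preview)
Your outline is correct, and its overall structure---locate $\beta_r$, prove strict dominance, extract $C_r$, then estimate $\beta_r^r$---matches the paper's. The details diverge in two places, and in both your route is a genuine alternative.

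\textbf{The closed form for $C_r$.} You compute it analytically: your generating function
\[
A(x)=\frac{1-x+x^2-x^{r+1}}{(1-x)(1-x-x^{r+1})}
\]
is correct (the $+x^2$ term comes from $a_1-a_0=0$ rather than $1$), and the residue at $x_0=1/\beta_r$ indeed gives the stated formula after using $x_0^{r+1}=1-x_0$ to simplify the numerator to $x_0^2$. The paper instead proves it combinatorially: it splits $s_r(K_2,P_{2n})$ according to which (if any) of the $r+1$ ``middle'' edges $e_{n-r},\ldots,e_n$ lies in the matching, obtaining a quadratic identity in the $s_r(K_2,P_m)$, and then compares both sides asymptotically. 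Your approach is more mechanical and requires no combinatorial insight; the paper's is elementary and avoids any analytic machinery.

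\textbf{The asymptotic $\beta_r^r\sim r/\log r$.} You bootstrap from $\beta_r^r(\beta_r-1)=1$ via $u_r=\beta_r-1$; the paper instead plugs the two test points $(r/\log r)^{1/r}$ and $((1+(\log r)^{-1/2})\,r/\log r)^{1/r}$ directly into $p_r$ and checks signs. Both work; yours is shorter but the paper's gives an explicit error term.

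Two small notes. In your dominance step, ``$q^r=\beta_r^r$ gives $q=\beta_r$'' needs one more line: feed $q^r=\beta_r^r$ back into $q^{r+1}=q^r+1$ to get $q^{r+1}=\beta_r^{r+1}$, whence $q=q^{r+1}/q^r=\beta_r$. And your closing remark that separability is used ``essentially'' in the strict-dominance step is slightly off: separability is what lets you write $s_r(K_2,P_n)=\sum c_i q_i^{\,n}$ without polynomial factors, but the strictness $|q_i|<\beta_r$ comes purely from the equality case of the triangle inequality and the defining equation.
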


\begin{proof}

Since $p_r(1) < 0$ and $p_r(2) > 0$, there are at least one root of $p_r(x)$ between $1$ and $2$. 
Because of $p'_r(x) = x^{r-1} ((r+1)x - r)>0$, $p_r(x)$ is increasing for $x>1 \geq \frac{r}{r+1}$. It follows that there is an unique positive real root $\beta_r$ of $p_r(x)$ and it lies between $1$ and $2$. 
If $q$ is a non-real root of $P_r(x)$, then $q^{r+1}$ and $q^r$ have the different argument, so  we have $1 =\beta_r^{r+1}-\beta_r^r =|q^{r+1}-q^{r}|> |q^{r+1}|-|q^{r}| =|q|^{r+1}-|q|^r$, which means that  $\beta_{r}> |q|$. Thus $\beta_r$ is the unique root with largest radius since $P_r(x)$ is separable.

First, we prove that  $\beta_r^{r} = (1+o(1))\frac{r}{\log{r}}.$ It is enough to verify the following two inequalities hold for large $r$. 
\begin{equation}\label{eq1} p_r \left((\frac{r}{\log{r}})^{1/r} \right)< 0 
\end{equation}  
\begin{equation}\label{eq2} p_r \left( ((1+ \frac{1}{\sqrt{\log r}})\frac{r}{\log{r}})^{1/r} \right)>0
\end{equation} 

\noindent The inequality (\ref{eq1}) comes from
$$ \log(r)( p_r \left((\frac{r}{\log{r}})^{1/r} \right)+1)= r((\frac{r}{\log{r}})^{1/r}-1) \leq r(e^{\frac{1}{r}(\log{r}-\log{\log{r}})}-1)$$ $$ \leq 
r\left( \frac{\log r - \log \log r}{r} + O(\frac{(\log r)}{r^2}) \right) = \log{r}-\log\log{r}+o(1) < \log{r}.$$

\noindent We also get the inequality (\ref{eq2}) from

$$ \log(r)(p_r \left( ((1+ \frac{1}{\sqrt{\log{r}}})\frac{r}{\log{r}})^{1/r} \right)+1)\geq r(1+\frac{1}{\sqrt{\log{r}}})(( \frac{r}{\log{r}})^{1/r}-1) $$ $$ \geq  r(1+\frac{1}{\sqrt{\log{r}}})(e^{\frac{1}{r}(\log{r}-\log{\log{r}})}-1) \geq 
 r(1+\frac{1}{\sqrt{\log{r}}})( \frac{\log r - \log \log r }{r} + O(\frac{(\log{r})^2}{r^2}))$$ $$ \geq \log{r}+\sqrt{\log{r}}+o(\sqrt{\log{r}}) > \log{r}.$$

\noindent  Now we show that there exists $C_r > 0$ such that $\lim_{n \to \infty}{\frac{s_r(K_2,P_n)}{\beta_r^{n-1}}} = C_r$. 
We use induction on $n$. Because of $s_r(K_2,P_n)>0$, there exists $c>0$ such that $s_r(K_2,P_n)\geq c\beta_r^{n-1}$ for $0 \leq n \leq r$. If this holds for all values less than $n$, then we have $s_r(K_2,P_n) =s_r(K_2,P_{n-1}) + s_r(K_2,P_{n-r-1}) \geq c(\beta_r^{n-2} + \beta_r^{n-r-2}) = c\beta_r^{n-1}$, it also holds for $n$. Thus there exists such $c>0$. Since $p(x)$ is separable, there exist $b_1 \in \mathbb{R}$ and $b_2 , ... , b_{r+1} \in \mathbb{C}$ such that $s_r(K_2,P_n) = b_1 \beta_r^{n-1}+ \sum_{i=2}^{r+1}{b_i {q_i}^{n-1}}$ for $n \geq 0$ and $b_1 > 0$. Then we have $$C_r = \lim_{n \to \infty}{\frac{s_r(K_2,P_n)}{\beta_r^{n-1}}}= b_1 > 0.$$\\

\noindent  Finally we find the value of $C_r$. By considering $r$-matchings in $P_{2n}$ containing $n-r+i$ th edges for $i=0, 1, \cdots,r$ and $r$-matchings not containing any of them, we get the following  
$$s_r(K_2,P_{2n}) = s_r(K_2,P_{n-r})s_r(K_2,P_{n}) + \sum_{i=0}^{r}s_r(K_2,P_{n-2r+i})s_r(K_2,P_{n-i})$$
$$ = (1+o(1))({C_r}^2 \beta_r^{2n-r-2} + (r+1){C_r}^2 \beta_r^{2n - 2r-2}).$$ 

 \noindent Then we have 

\begin{align*}
C_r =\lim_{n \to \infty}\frac{s_r(K_2,P_n)^2}{s_r(K_2,P_{2n-1})} & = \lim_{n \to \infty}\frac{(1+o(1))C_r^2 \beta_r^{2n-2}}{(1+o(1))({C_r}^2 \beta_r^{2n-r-2} + (r+1){C_r}^2 \beta_r^{2n - 2r-2})} \\  & = \frac{\beta_r^{2r}}{\beta_r^{r} + (r+1)}= (1+o(1))\frac{r}{(\log{r})^2}.
\end{align*}\end{proof}

\section{Trees with the maximum number of induced matchings} \label{induced}
In this section, we prove that the path of order $n$ contains the largest number of induced matchings among all trees of order $n$. Before that, we prove the following lemma.

\begin{lem} \label{lem}
Let $T$ be an $n$-vertex tree which is not a path. Take a minimal subtree $T_0$ containing all vertices of degree at least three. Let $v$ be a leaf of $T_0$ and $d(v)=d\geq 3$. Let $v^1,v^2,\cdots,v^{d-1}$ be its neighbors of degree two, each of which belongs to path $P^1, P^2, \cdots, P^{d-1}$ in $T-v$, respectively. Let $|P^i|=p_i$ where $ 1 \leq i \leq d-1$ and $p_1\geq p_2 \geq \cdots \geq p_{d-1}$. Then one of the followings holds. \\

1) $p_1\geq r+1$

2) $p_1\leq r$ and $p_1+p_j >r+1$ for all $2\leq i\neq j\leq d-1$

3) There exists an $n$-vertex tree $T'$ having as many $r$-matchings as $T$ such that the number of leaves in $T'$ is one less than $T$. If $n\geq r+3$ and $T$ has exactly three leaves, then $T'$ has strictly more $r$-matchings than $T$.
\end{lem}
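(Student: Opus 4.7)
The plan is to realize case 3 by contradiction: assume that both case 1 and case 2 fail, and construct the desired $T'$. Reading case 2 as the condition that $p_1 \le r$ and $p_1 + p_j > r+1$ for all $2 \le j \le d-1$, the joint failure yields $p_1 \le r$ together with some index $j$ satisfying $p_1 + p_j \le r+1$. By monotonicity of the $p_i$ I may take $j = d-1$, so $p_1 + p_{d-1} \le r+1$. Define $T'$ by the \emph{straightening} surgery at $v$: delete the edge $vv^{d-1}$ from $T$ and add a new edge from $v^{d-1}$ to the leaf-tip of $P^1$. The two pendant paths $P^1$ and $P^{d-1}$ at $v$ are thereby replaced by a single concatenated pendant path on $p_1 + p_{d-1}$ vertices, $\deg_{T'}(v) = d-1$, and $T'$ has exactly one fewer leaf than $T$ since the former tip of $P^1$ becomes a degree-$2$ vertex.

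The core step is the inequality $s_r(K_2, T') \ge s_r(K_2, T)$. The key observation is that the bound $p_1 + p_{d-1} \le r+1$ forces every $r$-matching $M$ of $T$ to use edges from at most one of the two pendant subtrees $P^1 \cup \{vv^1\}$ and $P^{d-1} \cup \{vv^{d-1}\}$: any edge of the first and any edge of the second lie in different branches at $v$, so their distance through $v$ equals the sum of their distances from $v$, which is at most $(p_1-1)+(p_{d-1}-1)=p_1+p_{d-1}-2\le r-1$. Writing $M = M_R \sqcup M_1 \sqcup M_{d-1}$ according to the three regions, with $R$ the subtree of $T$ obtained by removing the vertices of $P^1 \cup P^{d-1}$, at most one of $M_1, M_{d-1}$ is nonempty. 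I define an injection $\Phi$ from $r$-matchings of $T$ to $r$-matchings of $T'$ by keeping $M_R$ and $M_1$ at their existing positions (both sit unchanged inside $T'$), and \emph{shifting} the edges of $M_{d-1}$ by distance $p_1$ along the concatenated path (so that $vv^{d-1}$ becomes the new junction edge between the old tip of $P^1$ and $v^{d-1}$, and the edge of $P^{d-1}$ at distance $a$ from $v$ lands at distance $p_1+a$). Validity of $\Phi(M)$ as an $r$-matching of $T'$ is immediate because pairwise distances are preserved within each block and only increased between $M_R$ and the shifted $M_{d-1}$; injectivity follows by inverting the shift.

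For the strict inequality when $n \ge r+3$ and $T$ has exactly three leaves, a simple degree-sum argument forces $T$ to be a tripod with $T_0=\{v\}$, so the third branch at $v$ must be a single leaf $\ell$ (the only way the hypothesis ``$d-1$ neighbors of degree two'' is consistent with the $3$-leaf configuration), and $R=\{v,\ell\}$ has only the edge $v\ell$ at distance $0$ from $v$. From $n=p_1+p_{d-1}+2\ge r+3$ and $p_1+p_{d-1}\le r+1$ one concludes $p_1+p_{d-1}=r+1$, so the last edge $e$ of the concatenated path in $T'$ sits at distance $r$ from $v$. The pair $M^\ast=\{v\ell,e\}$ is then a valid $r$-matching of $T'$, but its ``would-be preimage'' $\{v\ell,e'\}$ (where $e'$ is the corresponding edge of $P^{d-1}$ at distance $p_{d-1}-1\le r-1$ from $v$ in $T$) is not an $r$-matching of $T$. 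Hence $M^\ast$ lies outside the image of $\Phi$, and $s_r(K_2,T')>s_r(K_2,T)$.

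The main potential obstacle is to verify rigorously that $\Phi$ is well-defined and injective, but both reduce to the elementary distance-through-$v$ identity for edges lying in different branches of $T$; the whole argument hinges on the dichotomy ``at most one of $M_1,M_{d-1}$ is nonempty,'' which is precisely what the hypothesis $p_1+p_{d-1}\le r+1$ supplies.
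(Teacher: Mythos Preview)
Your main construction and injection argument is essentially the paper's own proof: both perform the same ``straightening'' surgery (detach one pendant path and reattach it at the tip of the other) and both rely on the observation that, since $p_1+p_j\le r+1$, any $r$-matching of $T$ meets the two pendant branches in at most one edge total, so the obvious edge-identification between $T$ and $T'$ sends $r$-matchings to $r$-matchings injectively.

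There is, however, a genuine gap in your strict-inequality step. Your claim that ``the third branch at $v$ must be a single leaf $\ell$'' is not forced by the hypothesis. When $T$ has exactly three leaves it is a tripod with centre $v$ of degree $3$, and all three neighbours of $v$ can perfectly well have degree two (each branch may be a path of length $\ge 2$); the phrase ``$d-1$ neighbours of degree two'' just names two of them as $v^1,v^2$ without excluding the third from having degree two. Consequently your equation $n=p_1+p_{d-1}+2$ need not hold, and the deduction $p_1+p_{d-1}=r+1$ collapses, taking your specific witness $M^\ast=\{v\ell,e\}$ with it.

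The paper sidesteps this by observing only that after the surgery $T'$ is a \emph{path} on $n\ge r+3$ vertices, and then exhibiting an $r$-matching $\{v^2_{p_2-1}v^2_{p_2},\,e\}$ in $T'$ (with $e$ at distance $\ge r$ from the end edge) whose preimage pair has distance $<r$ in $T$. Your argument is easily repaired along the same lines: drop the single-leaf assumption, let the third branch be any path $P^3$, note that $T'$ is a path of length $n-1\ge r+2$, and take as witness the end edge of the concatenated $P^1\!+\!P^2$ side together with an edge at distance $r$ from it in $T'$; the corresponding pair in $T$ lies at distance $<r$ because the path through $v$ is shorter there.
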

\begin{proof}{ Suppose that  1) and 2) are not true.
Because of the choice of $v$, we know that $d$ is at least $3$ and $p_1+ p_2\leq r+1$. Let $P^1=v^1_1\cdots v^1_{p_1}$ and $P^2=v^2_1\cdots v^2_{p_2}$ and $v^1_1,v^2_1$ be the vertices adjacent to $v$. We construct a new tree $T'$ by replacing $vv^2_1$ with $v^2_1v^1_{p_1}$. 
It is obvious that $T'$ contains exactly one less leaf than $T$.
Let $L= E(P_1)\cup E(P_2) \cup vv^1_1 \cup vv^2_1 \cup v^2_1v^1_{p_1}.$
For any $r$-matching $M$ of $T$, $M \cap L$ has only one element because any two edges in $L$ have distance at most $r$. Hence, for any two edges $e,e'$ in $M$,  the distance between them in $T$ is the same as their distance in $T'$ unless one of them is in $L$. If $e$ is in $L$, the distance between $e$ and $e'$ in $T'$ is at least the distance between them in $T$. Thus, $M$ is still an $r$-matching in $T'$ and $T'$ has as many $r$-matchings as $T$. If $T$ has exactly three leaves, then $T'$ is a path with at least $r+3$ vertices, then we take $\{ v_{p_2-1}^2v_{p_2}^2,e\}$ where an edge $e$ has distance exactly $r+1$ from $v_{p_2-1}^2v_{p_2}^2$. This is an $r$-matching in $T'$, but not an $r$-matching in $T$. Thus $T'$ has strictly more $r$-matchings than $T$. 
}\end{proof}
 
\begin{claim} \label{cl}
For $n \in \mathbb{N}$, we have $s_2(T_2,P_n) \geq 2 s_2(T_2,P_{n-2})$, and the equality holds only for $n=2,3,4$.
\end{claim}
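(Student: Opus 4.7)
The plan is to derive the inequality directly from the recurrence (\ref{recurrence}) specialized to $r=2$, namely
\begin{equation*}
s_2(K_2,P_n) = s_2(K_2,P_{n-1}) + s_2(K_2,P_{n-3}) \qquad (n \geq 3),
\end{equation*}
together with the initial values from (\ref{initial values}) and the convention $s_2(K_2,P_0)=1$. Expanding $s_2(K_2,P_{n-1})$ via the same recurrence (valid for $n\geq 4$) gives
\begin{equation*}
s_2(K_2,P_n) = s_2(K_2,P_{n-2}) + s_2(K_2,P_{n-3}) + s_2(K_2,P_{n-4}),
\end{equation*}
and applying the recurrence once more to $s_2(K_2,P_{n-2})$ (valid for $n\geq 5$) to substitute $s_2(K_2,P_{n-3}) = s_2(K_2,P_{n-2}) - s_2(K_2,P_{n-5})$ produces the key identity
\begin{equation*}
s_2(K_2,P_n) - 2\, s_2(K_2,P_{n-2}) = s_2(K_2,P_{n-4}) - s_2(K_2,P_{n-5}).
\end{equation*}

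The desired inequality now collapses to the monotonicity statement $s_2(K_2,P_{n-4}) \geq s_2(K_2,P_{n-5})$, which is immediate: any induced matching of $P_{n-5}$ remains an induced matching after attaching a pendant vertex to obtain $P_{n-4}$. This settles $s_2(K_2,P_n) \geq 2\, s_2(K_2,P_{n-2})$ for all $n \geq 5$.

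For the equality analysis I would combine two observations. First, $s_2(K_2,P_k) > s_2(K_2,P_{k-1})$ strictly whenever $k \geq 2$: the final edge of $P_k$ gives a new singleton induced matching absent from the embedded copy of $P_{k-1}$. Hence for $n \geq 5$ the identity shows that equality can only come from the small initial plateau where $s_2(K_2,P_0) = s_2(K_2,P_1) = 1$, pinning down one potential equality case. The remaining small indices, where the two-fold application of the recurrence is not yet valid, are handled by direct computation from (\ref{initial values}). The core of the argument is a short algebraic reshuffling of the recurrence, so I anticipate no substantive obstacle beyond the careful bookkeeping of the base cases to match them against the stated equality set.
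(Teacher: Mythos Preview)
Your approach is essentially identical to the paper's: both expand the recurrence twice to reach $s_2(K_2,P_n) = s_2(K_2,P_{n-2}) + s_2(K_2,P_{n-3}) + s_2(K_2,P_{n-4})$ for $n\ge 5$ and then compare $s_2(K_2,P_{n-4})$ with $s_2(K_2,P_{n-5})$, using $s_2(K_2,P_{n-3}) + s_2(K_2,P_{n-5}) = s_2(K_2,P_{n-2})$. Your identity $s_2(K_2,P_n) - 2\,s_2(K_2,P_{n-2}) = s_2(K_2,P_{n-4}) - s_2(K_2,P_{n-5})$ is just a cleaner repackaging of the same computation; it also makes transparent that $n=5$ is an equality case as well (since $s_2(K_2,P_1)=s_2(K_2,P_0)=1$), a point the paper's stated equality set and its strict-inequality chain for $n\ge 5$ both overlook.
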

\begin{proof}
It is easy to check that the equality holds for $n=2,3,4$.
For $n\geq 5$, the recurrence relation (\ref{recurrence}) implies that
$s_2(T_2,P_n) = s_2(T_2,P_{n-1})+ s_2(T_2,P_{n-3}) = s_2(T_2,P_{n-2})+ s_2(T_2,P_{n-3})+ s_2(T_2,P_{n-4}) 
>  s_2(T_2,P_{n-2})+ s_2(T_2,P_{n-3})+ s_2(T_2,P_{n-5})
= s_2(T_2,P_{n-2})+s_2(T_2,P_{n-2}) = 2s_2(T_2,P_{n-2})$.\end{proof}

\begin{thm}
For a tree  $T$ of order $n$, we have
$$ s_2(K_2,T) \leq s_2(K_2,P_n)$$
and the equality holds only for $T=P_n$ or $T=K_{1,3}$.
\end{thm}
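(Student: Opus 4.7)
My plan is to prove the theorem by strong induction on $n$. For $n \leq 4$ the only four-vertex trees are $P_4$ and $K_{1,3}$, both with four induced matchings, while the smaller cases are immediate. For the inductive step with $n \geq 5$ and $T \neq P_n$, $T$ contains a branching vertex, so I would invoke Lemma~\ref{lem} with $r = 2$, which places $T$ in one of three cases. In case (3) the Lemma delivers a tree $T'$ on $n$ vertices with strictly fewer leaves and $s_2(K_2, T') \geq s_2(K_2, T)$; iterating this reduction must terminate either at a path (finishing the argument) or at a tree falling under case (1) or case (2) of the Lemma, so it suffices to bound $s_2(K_2, T)$ in those two situations.

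For case (1), where $p_1 \geq r + 1 = 3$, I would condition on whether the terminal edge $v^1_{p_1 - 1} v^1_{p_1}$ lies in the matching, yielding the leaf-deletion recurrence
\[
s_2(K_2, T) = s_2(K_2, T - v^1_{p_1}) + s_2(K_2, T - \{v^1_{p_1 - 2}, v^1_{p_1 - 1}, v^1_{p_1}\}).
\]
Because $p_1 \geq 3$, the three removed vertices sit at the end of $P^1$, and both resulting graphs are trees of orders $n - 1$ and $n - 3$ respectively. The outer induction hypothesis together with the path recurrence~\eqref{recurrence} then gives $s_2(K_2, T) \leq s_2(K_2, P_{n-1}) + s_2(K_2, P_{n-3}) = s_2(K_2, P_n)$.

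For case (2), where $p_1 = \cdots = p_{d-1} = 2$, I would apply the same recurrence to the leaf $v^1_2$ with neighbor $v^1_1$. The second summand becomes a count on the forest $T - \{v, v^1_1, v^1_2\}$, which decomposes into $d - 2$ isolated copies of $P_2$ (the remaining length-two arms) together with the subtree $R$ attached to $v$ through its non-pendant neighbor, of order $n - 2d + 1$. Since $s_2(K_2, P_2) = 2$, this factors as $2^{d - 2}\, s_2(K_2, R)$. Induction gives $s_2(K_2, R) \leq s_2(K_2, P_{n - 2d + 1})$, while iterating Claim~\ref{cl} a total of $d - 2$ times yields $s_2(K_2, P_{n - 3}) \geq 2^{d - 2}\, s_2(K_2, P_{n - 2d + 1})$. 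Combined with $s_2(K_2, T - v^1_2) \leq s_2(K_2, P_{n - 1})$, this again produces $s_2(K_2, T) \leq s_2(K_2, P_n)$. For the equality case I would trace back: equality forces $T - v^1_{p_1}$ (respectively $T - v^1_2$) to be $P_{n - 1}$, or $K_{1,3}$ when $n = 5$; a short arithmetic check then rules out the $K_{1,3}$ alternative within the inductive range, leaving only $T = P_n$ for $n \geq 5$ and $T \in \{P_4, K_{1,3}\}$ for $n = 4$.

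I expect case (2) to be the main obstacle: the Lemma's local rearrangement does not apply there because $p_1 + p_2 = 4$ exceeds $r + 1 = 3$, so leaves cannot be removed through the Lemma. The rescue is Claim~\ref{cl}, whose iteration absorbs the factor $2^{d - 2}$ coming from the isolated length-two arms into a single copy of $s_2(K_2, P_{n - 3})$, and this is precisely why Claim~\ref{cl} was proved just before the theorem.
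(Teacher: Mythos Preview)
Your proposal is correct and follows essentially the same route as the paper: both arguments use Lemma~\ref{lem} to split into three cases, handle Case~1 via the leaf-edge recurrence and induction, handle Case~2 by factoring out the $2^{d-2}$ contribution of the length-two arms and absorbing it through iterated applications of Claim~\ref{cl}, and derive strictness in Case~3 from the last clause of Lemma~\ref{lem}. The only organizational difference is that the paper selects at the outset a non-path $T$ with maximum $s_2$ and minimum number of leaves (so Case~3 is disposed of in one line by minimality), whereas you iterate the Case~3 reduction explicitly until landing in Case~1, Case~2, or a path; these are equivalent devices. Your equality analysis is also sound: in Cases~1 and~2 the tree $T-v^1_{p_1}$ (resp.\ $T-v^1_2$) still contains $v$ with degree $\ge 3$, hence is neither a path nor (once $n\ge 6$, which both cases force) $K_{1,3}$, so strict inequality follows.
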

\begin{proof}

\noindent We use induction on $n$. For the base case, we check all the trees with at most $4$ vertices, and conclude that $P_1, P_2, P_3, P_4$ and $K_{1,3}$ are all possible trees and all of them have exactly one $2$-matching.
 Take all $n$-vertex trees with the most number of $2$-matchings which are not paths. Among those trees, we take a tree $T$ with the smallest number of leaves. We may assume $n\geq 5$. Note that $T$ has at least three leaves since it is not a path.
If $T$ has distance at most three, then the theorem holds by Observation \ref{observation}. So we suppose that $T$ is not a double star. Then take a minimal subtree $T_0$ in $T$ containing all vertices of degree at least three. Let $v$ be a leaf of $T_0$. Then $T-v$ has at least two path components. Let $v^1_{1}v^1_{2}\cdots v^1_{p_1}, \cdots,v^j_{1}v^j_{2}\cdots v^j_{p_1}  $ denote $j$ paths in $T-v$, where  $p_1\geq p_2\geq \cdots \geq p_j$, such that $v^j_{1}$ adjacent to $v$. By Lemma \ref{lem}, one of the following three cases holds.\\

\noindent {\bf {Case 1.} $\mathbf{p_1\geq 3}$.}\\
Then the recurrence relation(\ref{recurrence}) and induction hypothesis imply that
\begin{align*}
s_2(K_2, T) & = s_2(K_2, T-v^1_{p_1}) + s_2(K_2, T-v^1_{p_1-2} -v^1_{p_1-1}-v^1_{p_1}) \\ 
&< s_2(K_2,P_{n-1}) + s_2(K_2,P_{n-r-1})= s_2(K_2, P_{n}). 
\end{align*}
\vspace{0.1cm}

\noindent {\bf {Case 2.} $\mathbf{p_1\leq 2}$ {\bf and} $\mathbf{p_i+p_{i'}\geq 4}$ {\bf for} $\mathbf{1\leq i\neq i'\leq j}$.}\\
Then we have $p_1=\cdots=p_j=2$ and
  $$s_2(K_2, T) = s_2(K_2, T-v^1_2) + 2^{j-1}s_2(K_2, T-v^1_1-v^1_2-\cdots-v^j_1- v^j_{2}-v).$$ 
 We get the last term $2^{j-1} s_2(K_2, T-v^1_1-v^1_2-\cdots-v^j_1-v^j_{2}-v)$ by counting  $2$-matchings  containing $v_1^1v_2^1$ according to its intersection with  $\{v^i_{1}v^i_{2} : i=2,\cdots, j \}$.
By Claim \ref{cl}, we have 
\begin{align*}
s_2(K_2, T) 
&\leq s_2(K_2,P_{n-1}) + 2^{j-1} s_2(K_2, P_{n-2j-1})\\
&<s_2(K_2,P_{n-1}) + s_2(K_2, P_{n-3}) = s_2(K_2,P_n).
\end{align*}
\vspace{0.1cm}

\noindent {\bf {Case 3.}  There exists an $n$-vertex tree $T'$ with as many $2$-matchings as $T$ such that $T'$ have one less leaf than $T$.}\\
 By our choice of $T$, it is only possible when $T'$ is a path and $T$ contains exactly three leaves. In this case,  Lemma \ref{lem} implies that $T'$ has strictly more $2$-matchings than $T$ for $n\geq 5$.  \end{proof}

\section{The number of $r$-matchings in a tree for large $r$} \label{big r}
In this section,  we estimate the maximum number of $r$-matchings in a tree $T$ of fixed order, for large $r$.  Let $s\in \mathbb R$ be such that $r/2+s = (s+1)\log(s+1)$. Note that $(x+1)^{\frac{1}{r/2+x}}$ has its maximum value when $x=s$ and the maximum value is $(1+o(1))\frac{r}{2\log{r}}$. Also note that $r>s$ for all $r\geq 2$. Let $\alpha_r = (s+1)^{\frac{1}{r/2+s}} = e^{\frac{1}{s+1}}$.\\

\noindent Before proving Theorem \ref{main}, we do useful calculations in the following Claims.

\begin{claim} \label{computation 1}
$(e^{\frac{1}{r}}\alpha_r)^{r/2+s/2+1} \geq (e^{\frac{1}{r}}\alpha_r)^{r/2+s/2}+1 $
\end{claim}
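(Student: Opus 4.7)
The plan is to denote $\gamma := e^{1/r}\alpha_r$ and $m := r/2 + s/2$, so that the inequality to be proved becomes $\gamma^{m+1} \geq \gamma^m + 1$, equivalently $\gamma^m(\gamma - 1) \geq 1$. I would establish this by producing separately the two lower bounds $\gamma^m \geq s+1$ and $\gamma - 1 \geq 1/(s+1)$, and multiplying.

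For the first bound I work logarithmically. Since $\alpha_r = e^{1/(s+1)}$, one has $\log\gamma = 1/r + 1/(s+1)$, and direct expansion gives
\[
m\log\gamma \;=\; \tfrac{1}{2} + \tfrac{s}{2r} + \tfrac{r+s}{2(s+1)}.
\]
The key step is now to invoke the defining relation $r/2 + s = (s+1)\log(s+1)$, which after division by $s+1$ reads $\log(s+1) = (r+2s)/(2(s+1))$; subtracting $s/(2(s+1))$ produces exactly the bare fraction $(r+s)/(2(s+1))$. Substituting gives $m\log\gamma = \log(s+1) + 1/(2(s+1)) + s/(2r)$, and since the two residual terms are manifestly non-negative this yields $\gamma^m \geq s+1$.

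For the second bound, the elementary inequality $e^x \geq 1 + x$ applied at $x = 1/r + 1/(s+1)$ immediately gives $\gamma - 1 \geq 1/r + 1/(s+1) \geq 1/(s+1)$. Multiplying the two bounds yields $\gamma^m(\gamma - 1) \geq (s+1)\cdot 1/(s+1) = 1$, as required.

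The only non-routine step is the algebraic rearrangement in the middle paragraph: one has to notice that the bare fraction $(r+s)/(2(s+1))$ appearing in $m\log\gamma$ can be rewritten in terms of $\log(s+1)$ using the defining equation for $s$. Once that substitution is made, the residual terms are visibly non-negative and the remaining work reduces to a one-line application of $e^x \geq 1+x$, so I do not anticipate any genuine difficulty. In fact, the inequality comes out strictly, since $(s+1)/r > 0$ and the exponential factor $e^{1/(2(s+1)) + s/(2r)}$ strictly exceeds $1$ for all $r \geq 2$.
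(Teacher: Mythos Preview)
Your proof is correct and follows essentially the same approach as the paper's: both factor the difference as $\gamma^m(\gamma-1)$, use $e^x-1\ge x$ to bound $\gamma-1$, and exploit the defining relation $r/2+s=(s+1)\log(s+1)$ to simplify $\gamma^m$ to $(s+1)e^{1/(2(s+1))+s/(2r)}$. Your presentation is arguably cleaner—splitting the estimate into the two separate bounds $\gamma^m\ge s+1$ and $\gamma-1\ge 1/(s+1)$—whereas the paper keeps the extra $(s+1)/r$ term and writes the product out in one chain, but the underlying computation is the same.
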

\begin{proof}
$(e^{\frac{1}{r}}\alpha_r)^{r/2+s/2+1}-(e^{\frac{1}{r}}\alpha_r)^{r/2+s/2} = (e^{\frac{1}{r}}e^{\frac{1}{s+1}} -1)e^{\frac{r/2+s/2}{r}}(s+1)e^{-\frac{s/2}{s+1}} $ \newline
\indent\indent\indent $\geq (\frac{1}{s+1}+\frac{1}{r})(s+1)e^{\frac{r/2+s/2}{r} - \frac{s/2}{s+1}}
 \geq (1+ \frac{s+1}{r}) e^{\frac{1}{2(s+1)}}e^{\frac{s}{2r}} \geq 1. $
\end{proof}

\begin{claim} \label{computation 2}
For an integer $w$ such that $w=as$ with $0\leq a \leq 1$, we have
$(e^{\frac{1}{r}}\alpha_r)^{r+w+1}\geq (e^{\frac{1}{r}}\alpha_r)^{r+w}+w$.
\end{claim}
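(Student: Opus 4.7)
The plan is to reduce the claim to the elementary convex inequality $u - 1 \geq \log u$ for $u > 0$. Writing $\beta := e^{1/r}\alpha_r$, the desired bound is equivalent to $\beta^{r+w}(\beta - 1) \geq w$. The case $w = 0$ is trivial, so I may assume $w \geq 1$.

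First I would bound $\beta - 1$ from below in the cheapest possible way. Since $\log\beta = 1/r + 1/(s+1)$, the inequality $e^x \geq 1+x$ gives $\beta - 1 \geq 1/r + 1/(s+1) \geq 1/(s+1)$. It therefore suffices to prove $\beta^{r+w} \geq w(s+1)$, which after taking logarithms becomes
$$ (r+w)\left(\frac{1}{r} + \frac{1}{s+1}\right) \;\geq\; \log w + \log(s+1). $$

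Next I would expand the left-hand side and invoke the defining identity $r = 2(s+1)\log(s+1) - 2s$, equivalently $r/(s+1) = 2\log(s+1) - 2s/(s+1)$. One copy of $\log(s+1)$ then cancels against the right-hand side, and after the substitution $u = w/(s+1) \in (0,\, s/(s+1)]$ the inequality collapses to
$$ (u - 1 - \log u) + \frac{2}{s+1} + \frac{u(s+1)}{r} \;\geq\; 0, $$
which is manifestly true because each of the three summands is nonnegative (the first by the standard convexity bound, the others trivially).

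The only real obstacle is gaining the confidence that the lossy first step $\beta - 1 \geq 1/(s+1)$ does not throw away too much when $w$ is close to its upper bound $s$ (equivalently, when $u$ is close to $1$ and the slack in $u - 1 - \log u$ is small). Reassurance comes from the two strictly positive correction terms $2/(s+1)$ and $u(s+1)/r$ that remain at the end, which provide a comfortable safety margin. Note that Claim \ref{computation 1}, although proved just above, is not needed for this argument; however, the sharper bound $\beta - 1 \geq \beta^{-(r/2+s/2)}$ implied by that claim would also work and would only add further slack.
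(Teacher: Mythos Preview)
Your proof is correct and follows essentially the same strategy as the paper's: factor out $\beta^{r+w}$, bound $\beta-1\geq 1/(s+1)$ via $e^x\geq 1+x$, substitute the defining relation $r/2+s=(s+1)\log(s+1)$, and reduce to the convexity inequality $u-1\geq\log u$ (which the paper writes multiplicatively as $e^{a-1}\geq a$ with $a=w/s$). Your additive organization after taking logarithms, ending in a sum of three visibly nonnegative terms, is slightly cleaner than the paper's multiplicative chain, but the underlying ideas are identical.
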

\begin{proof}
$(e^{\frac{1}{r}}\alpha_r)^{r+w+1}-(e^{\frac{1}{r}}\alpha_r)^{r+w} = (e^{\frac{1}{r}}(s+1)^{\frac{1}{r/2+s}})^{r+w}(e^{\frac{1}{r}}(s+1)^{\frac{1}{r/2+s}}-1) $ \newline
\indent\indent\indent  $= e^{1+\frac{w}{r}}(s+1)^{2+\frac{w-2s}{(s+1)\log(s+1)}}(e^{\frac{1}{r}}e^{\frac{1}{(s+1)}}-1) \geq e^{1}(s+1)^2 e^{\frac{w-2s}{s+1}}( \frac{1}{s+1}+\frac{1}{r})$ \newline 
\indent\indent\indent $\geq e^{1+\frac{w-2s}{s+1}}(s+1 + \frac{(s+1)^2}{r}) \geq  e^{a-1}(s+1) \geq a(s+1) \geq w.$ \end{proof}

\begin{thm} \label{main}
For an integer $r\geq 2$ and a tree $T$ with $n$ vertices,  $s_r(K_2, T) $ is at most $(s+1)(e^{\frac{1}{r}}\alpha_r)^{n-1}$.
\end{thm}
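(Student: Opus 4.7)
I would prove Theorem~\ref{main} by strong induction on $n$, writing $A:=e^{1/r}\alpha_r$ throughout. The workhorse is the leaf-peeling identity: for any leaf $v\in T$ with unique neighbor $u$,
$$s_r(K_2,T)=s_r(K_2,T-v)+s_r(K_2,T\setminus B_{r-1}(u)),$$
where $B_{r-1}(u)$ denotes the set of vertices of $T$ at distance at most $r-1$ from $u$ and $T\setminus B_{r-1}(u)$ is the induced subgraph on the complement, whose $r$-matchings correspond to the $r$-matchings of $T$ containing the edge $uv$. For the base case I would handle $n$ small enough that the crude bound $s_r(K_2,T)\leq 2^{n-1}$ is dominated by $(s+1)A^{n-1}$.

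When $T$ is a path the identity becomes the recurrence $s_r(K_2,P_n)=s_r(K_2,P_{n-1})+s_r(K_2,P_{n-r-1})$, and after inserting the inductive hypothesis the required inequality reduces to $A^{r+1}\geq A^r+1$. Since $r\geq r/2+s/2$ whenever $r\geq s$, which holds for $r\geq 2$, this follows from Claim~\ref{computation 1} by the monotonicity $A^r(A-1)\geq A^{r/2+s/2}(A-1)\geq 1$. When $T$ is not a path, following the setup of Lemma~\ref{lem} I would take $v$ to be a leaf of the minimal subtree $T_0\subseteq T$ containing every vertex of degree at least three, so that $v$ has degree $d\geq 3$ with pendant paths $P^1,\ldots,P^{d-1}$ in $T-v$ of vertex sizes $p_1\geq\cdots\geq p_{d-1}$. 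If $p_1\geq r+1$ (a long pendant path exists), apply the peeling identity at the leaf $v^1_{p_1}$ of $P^1$: the ball $B_{r-1}(v^1_{p_1-1})$ is then entirely contained in $P^1$ and consists of exactly $r+1$ vertices, so $T\setminus B_{r-1}(v^1_{p_1-1})$ is a connected tree on $n-r-1$ vertices and the argument closes by Claim~\ref{computation 1} exactly as in the path case.

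The remaining case $p_1\leq r$ is the main obstacle: the ball around any leaf's neighbor reaches past $v$, so $T\setminus B_{r-1}(\cdot)$ may become a forest whose naive component-wise product bound carries a prohibitive $(s+1)^{m-1}$ factor. Claim~\ref{computation 2} is calibrated precisely for a refined decomposition here. When $v$ has $w+1\geq 1$ leaf neighbors (i.e.\ $w+1$ of the $p_i$ equal $1$), the $w+1$ incident leaf edges at $v$ are pairwise at distance $0$, so at most one lies in any $r$-matching; classifying matchings by which (if any) is used yields
$$s_r(K_2,T)=s_r(K_2,T-L)+(w+1)\,s_r(K_2,T\setminus B_{r-1}(v)),$$
where $L$ is the set of $w+1$ leaf neighbors. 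In the cleanest sub-case---the inward direction at $v$ is long enough and the other pendant paths fit inside $B_{r-1}(v)$---both summands correspond to trees, and the inequality reduces to $A^{r+w+1}\geq A^r+w+1$. This follows by telescoping Claim~\ref{computation 1} with Claim~\ref{computation 2}: summing $A^{r+1}-A^r\geq 1$ with $A^{r+i+1}-A^{r+i}\geq i$ for $1\leq i\leq w\leq s$ gives $A^{r+w+1}-A^r\geq 1+w(w+1)/2\geq w+1$ (and a similar telescoping using $A^{r+s}(A-1)\geq s$ extends this to $w>s$). The hardest piece is the sub-case in which $v$ has no leaf neighbor at all (every $p_i\geq 2$): neither the long-pendant-path trick nor the leaf-star trick applies directly, and I would handle it either by conditioning on the edge of $P^1\cup\{v v^1_1\}$ closest to $v$---producing a sum of $p_1\leq r$ terms whose bookkeeping again matches Claim~\ref{computation 2} with a variable width parameter---or by a structural rearrangement in the spirit of Case~3 of Lemma~\ref{lem} that reduces to the previously handled cases without decreasing $s_r(K_2,T)$.
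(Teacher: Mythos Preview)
Your proposal has two genuine gaps.

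\textbf{Base case.} The bound $s_r(K_2,T)\le 2^{n-1}$ is useless here: for large $r$ one has $A=e^{1/r}\alpha_r\to 1$, so $2^{n-1}\le (s+1)A^{n-1}$ only holds for $n=O(\log s)$, far short of the range $n\le r+1$ that the recursion needs. The paper's base case is simply that for $n\le r+2$ every $r$-matching has at most one edge, so $s_r(K_2,T)=n$, and then checks $n\le (s+1)A^{n-1}$ directly.

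\textbf{The case $p_1\le r$.} This is where your argument breaks down. Your ``$w+1$ leaf neighbours'' decomposition requires (i) some $p_i=1$, (ii) all pendant paths to sit inside $B_{r-1}(v)$, and (iii) the inward branch to be long enough that $T\setminus B_{r-1}(v)$ is again a single tree. None of these is guaranteed, and you yourself flag the sub-case $p_i\ge 2$ for all $i$ as unfinished. Moreover, you never set up the extremal step that would let you invoke Case~3 of Lemma~\ref{lem} to escape. The paper's route is different and avoids all of this. It first restricts to an extremal $T$ with the \emph{fewest leaves}; Lemma~\ref{lem} then forces $p_1+p_2>r+1$ (Case~3 is impossible by minimality). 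Now one splits on $p=p_1$ versus the threshold $r/2+s/2$. If $p\ge r/2+s/2+1$, peel the leaf edge $v_1v_2$: the ball removes at least $r/2+s/2+1$ vertices along $P^1$, and Claim~\ref{computation 1} closes the estimate. If $p\le r/2+s/2$, the ball around $v_2$ swallows all of $P^1$ and the top of $P^2$ but, because $p+q>r+1$, leaves a stub of $P^2$ of length $p+q-r-1\le s-1$; removing just the vertices of $P^1$ together with $u_q,\dots,u_{p+q-r}$ disconnects $T$ into the main piece and this short stub. The short stub contributes a factor at most $p+q-r$, which is exactly the parameter $w$ in Claim~\ref{computation 2}, and the inequality $A^{r+w+1}\ge A^{r+w}+w$ finishes Case~2. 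The point you are missing is that Claim~\ref{computation 2} is tailored to a \emph{second pendant path} producing a short disconnected stub, not to a count of leaf-neighbours at $v$.
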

\begin{proof}{
We use induction on $n$. For $n\leq r+2$, the theorem holds because of the following equation:  $$n \leq (s+1)(1+ \frac{n-1}{r}+ \frac{n-1}{s+1}) \leq (s+1)(e^{\frac{1}{r}} e^{\frac{1}{s+1}})^{n-1} = (s+1)(e^{\frac{1}{r}}\alpha_r)^{n-1}.$$

 \noindent Assume a counterexample exists with minimum $n$. Then we have $n\geq r+1$, and we take all tree $T$ with $n$ vertices with the maximum number of $r$-matchings, and among those we take the one with the minimum number of leaves. Obviously, $T$ has diameter at least $r+2$.

If $T$ is a path, say $v_1v_2\cdots v_p$ with $p\geq r+3$, then we have the following Case 1. Otherwise we take a minimal subtree $T_0$ containing all vertices of degree at least three, and let $u$ be a leaf of $T_0$. Because of the choice of $u$, $T-u$ consists of at least three components and at most one of them is not a path. Let $v_1\ldots v_p$ and $u_1\ldots u_q$ be two paths such that $v_p$, $u_q$ are both adjacent to $u$ and $v_1,u_1$ are both leaves in $T$ and $p\geq q$. By Lemma \ref{lem}, we have either $p\geq r+1$, $p+q>r+1$, or there exists a tree $T'$ with at many $r$-matchings as $T$ with less leaves.
Because of our choice of $T$, it cannot be the third case. Thus it's one of the first two, and in any case, we have $p+q >r+1$. Let $L_1= \{v_1,v_2,\cdots,v_{\min\{p,r+1\}}\}$.
 \\

\noindent {\bf Case 1. } $\mathbf {p \geq r/2 + s/2+1}$. \\

\noindent First, consider all $r$-matchings containing $v_1v_2$. 
Since deleting $v_1v_2$ from them gives $r$-matchings in $T-L_1$, induction hypothesis implies that there are at most $(s+1)(e^{\frac{1}{r}}\alpha_r)^{n-1-\min\{p,r+1\}}\leq (s+1)(e^{\frac{1}{r}}\alpha_r)^{n-r/2-s/2-2}$ $r$-matchings containing $v_1v_2$, since  $\min\{p,r+1\} \geq r/2+s/2+1$ for $r\geq 2$. Again by induction hypothesis, there are $(s+1)(e^{\frac{1}{r}}\alpha_r)^{n-2}$ $r$- matchings not containing $v_1v_2$.
By  Claim \ref{computation 1}, the number of $r$-matchings in $T$ is at most 
$$(s+1)((e^{\frac{1}{r}}\alpha_r)^{n-2}+(e^{\frac{1}{r}}\alpha_r)^{n-r/2-s/2-2}) \leq (s+1)(e^{\frac{1}{r}}\alpha_r)^{n-1}.$$ 

\vspace{0.3cm}

\noindent {\bf Case 2.} $ \mathbf {q\leq p \leq r/2+s/2}$.\\

\noindent Here we have $L_1= \{v_1,v_2,\cdots,v_{p}\}$. Let $L_2 = \{ u_q, \cdots, u_{p+q-r}\}$. Consider a forest $F=T-L_1-L_2$.  Then $F$ contains two components, one of which is a component containing $u_0$, say $T_0$, and the other one $T_1$ does not contain $u_0$.
Then $T_0$ has at most $p+q-r-2 \leq s-1$ edges and $T_1$ has $m-p-q \leq m-r-2$ edges. Also,
the number of all $r$-matchings in $T$ without $v_1v_2$ is  at most $(s+1)(e^{\frac{1}{r}}\alpha_r)^{n-2}$. If we take an $r$-matching with $v_1v_2$ in $T$, and delete $v_1v_2$, then it is also a  $r$-matching in $F$.
Thus, the number of all $r$-matchings in $T$ with $v_1v_2$ is at most the number of $r$-matchings in $F$. Let $p+q= r+w$ with $w = as \leq s$, where $0\leq a \leq 1$. 
Since the number of $r$-matchings in $F$ is at most $(p+q-r)(s+1)(e^{\frac{1}{r}}\alpha_r)^{n-p-q-2}$, by Claim \ref{computation 2} the number of $r$-matchings in $T$ is at most $$(s+1)((e^{\frac{1}{r}}\alpha_r)^{n-2} + w(e^{\frac{1}{r}}\alpha_r)^{n-r-w-2})\leq (s+1)(e^{\frac{1}{r}}\alpha_r)^{n-1} .$$ }\end{proof}

\noindent In the following Theorem \ref{const},
 we prove that Theorem \ref{main} is pretty close to being tight. 

\begin{thm} \label{const}
For large $r$, there is a tree $T$ with $n$ vertices such that the number of $r$-matchings in $T$ is at least $(e^{-\frac{6}{r^2}}\alpha_r)^{n-1}$.
\end{thm}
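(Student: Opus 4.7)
The plan is to construct an explicit tree $T$ with $n$ vertices and directly lower-bound the number of its $r$-matchings. Guided by the tight case of Theorem~\ref{main} (Case~2 with $p \approx q \approx r/2 + s/2$, together with the identity $\alpha_r^{r/2+s}=s+1$ driving Claims~\ref{computation 1}--\ref{computation 2}), I would take $T$ to be a spider (subdivided star) whose legs have length close to $r/2+s$.

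Construction. Set $\ell = \lceil r/2 + s\rceil$ and write $n - 1 = k\ell + j$ with $0 \le j < \ell$. Let $T$ be the subdivided star with a central vertex $c$, $k$ pendant paths of length $\ell$, and (when $j > 0$) one additional pendant path of length $j$ attached to $c$; then $T$ has exactly $n$ vertices. Label the edges of each full leg by their distance $1, 2, \ldots, \ell$ from $c$, and call an edge \emph{far} if its label exceeds $\lceil r/2\rceil$; each full leg then contains at least $s$ far edges.

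Exhibiting matchings. For each of the $k$ full legs independently, choose either no edge or exactly one far edge. This produces $(s+1)^k$ configurations. Any two chosen far edges lie on distinct legs, and their shortest connecting path in $T$ passes through $c$ and has length at least $r/2 + r/2 = r$, so every configuration is a valid $r$-matching. Thus $s_r(K_2, T) \ge (s+1)^k$. Using $\alpha_r^{r/2+s} = s+1$ we rewrite $(s+1)^k = \alpha_r^{k(r/2+s)}$, so
\[
\frac{(s+1)^k}{\alpha_r^{n-1}} \;=\; \alpha_r^{-d}, \qquad d \;=\; j + k(\ell - r/2 - s).
\]
Since $\ell - r/2 - s < 1$ and $j < \ell \le r/2 + s + 1$, the deficit $d$ is $O(r)$ in the worst case. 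Combining $\log\alpha_r = 1/(s+1)$ with the asymptotics $s + 1 = (1+o(1))\, r/(2\log r)$ recorded before the theorem, a routine estimate yields $d \log \alpha_r \le 6(n-1)/r^2$ for $r$ large, hence $s_r(K_2, T) \ge (e^{-6/r^2}\alpha_r)^{n-1}$.

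Main obstacle. The delicate step is calibrating the constant in the exponent: one must show that the deficit $d$ is absorbed by the $6(n-1)/r^2$ slack. Because $d$ can be as large as $\Theta(r)$ (when $j$ is close to $\ell$), a direct application of the spider bound only suffices when $n - 1$ is at least of order $r \log r$ or so. For smaller $n$ one can either use the trivial observation $s_r(K_2, P_n) \ge n$ (which already outperforms $(e^{-6/r^2}\alpha_r)^{n-1}$ when $n$ is tiny compared to $r$), or chain several spiders along a connecting path so that the two endpoint effects amortize over a long tree; the $(s+1)^k$ argument then carries over because far edges attached to different centers remain mutually at distance at least $r$.
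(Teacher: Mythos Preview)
Your construction and counting match the paper's almost exactly: the paper also builds a subdivided star, with leg length $\ell'=\lceil r/2+s+\tfrac12\rceil$, deletes the vertices within distance $(r-1)/2$ of the centre, and counts $(\lceil s+1\rceil)^k$ matchings by choosing at most one edge per remaining segment. Two small differences are worth noting. First, the paper only treats $n$ with $\ell'\mid(n-1)$, so your remainder leg of length $j$ never arises there. Second, the extra $+\tfrac12$ in $\ell'$ is precisely what guarantees at least $s+1$ choices per leg for both parities of $r$; with your $\ell=\lceil r/2+s\rceil$ and $r$ odd, the number of far edges is $\lceil s+\tfrac12\rceil-1$, which can fall below $s$, so the bound $(s+1)^k$ is not quite attained as stated.

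The substantive gap is your deficit estimate. You assert that $d$ is $O(r)$ in the worst case, but $d=j+k(\ell-r/2-s)$ contains the term $k(\ell-r/2-s)$, and since $\ell-r/2-s$ need not be $o(1)$ this piece can be of order $k\sim 2(n-1)/r$ rather than $O(r)$. Dividing by $s+1\sim r/(2\log r)$ then contributes up to $\Theta\bigl((n-1)\log r/r^2\bigr)$ to the exponent, which exceeds $6(n-1)/r^2$ for large $r$; so the ``routine estimate'' only delivers $(e^{-c\log r/r^2}\alpha_r)^{n-1}$, not the stated $(e^{-6/r^2}\alpha_r)^{n-1}$. (In fairness, the paper's own final inequality $(s+1)^{-3(n-1)/(2(r/2+s)^2)}\ge e^{-6(n-1)/r^2}$ suffers from the same defect: it is equivalent to $r^2\log(s+1)\le 4(r/2+s)^2$, and since $r/2+s\sim r/2$ while $\log(s+1)\to\infty$, this fails for large $r$.)
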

\begin{proof} Let $n$ be an integer such that $n-1$ is a multiple of $\lceil r/2+s+\frac{1}{2}\rceil$.  We construct a tree $T$ with $\frac{n-1}{\lceil r/2+s+\frac{1}{2}\rceil}$ leaves and $n$ vertices from $K_{1,\frac{n-1}{\lceil r/2+s +\frac{1}{2}\rceil}}$ by subdividing each edge $\lceil r/2+s +\frac{1}{2}\rceil-1$ times.
We consider the forest $F$ obtained from $T$ by deleting all vertices with distance at most $(r-1)/2$ from the vertex of degree $\frac{n-1}{\lceil r/2+s+\frac{1}{2} \rceil}$.

\noindent All $r$-matchings in $F$ are also $r$-matchings in $T$ since  
the distance between $e$ and $e'$ in $T$ is at least $r$ for edges $e,e'$ in two distinct components of $F$. Since the number of $r$-matchings in each path is  $\lceil r/2 + s+\frac{1}{2} \rceil - \lceil \frac{r}{2} \rceil +1 \geq \lceil s+1 \rceil$, the number of $r$-matchings in $F$ is at least 
$$(\lceil s+1\rceil )^{\frac{n-1}{\lceil r/2+s +\frac{1}{2}\rceil}} \geq (s+1)^{\frac{n-1}{r/2+s}}(s+1)^{\frac{n-1}{\lceil r/2+s+\frac{1}{2}\rceil}-\frac{n-1}{r/2+s}}$$ $$\geq (\alpha_r )^{n-1}(s+1)^{-\frac{3(n-1)}{2(r/2+s)^2} } \geq  (e^{-\frac{6}{r^2}}\alpha_r)^{n-1}.$$ Therefore, the number of $r$-matchings in $T$ is at least $(e^{-\frac{6}{r^2}}\alpha_r)^{n-1}$. \end{proof}

\noindent The ratio between Theorem \ref{main} and Theorem \ref{const} is at most $e^{\frac{1}{r} +\frac{6}{r^2}} = \alpha_r^{\frac{1+o(1)}{2\log{r}}}$.
Note that the number of $r$-matchings in the  $n$-vertex path is  $$(C_r+o(1))\beta_r^{n-1} = (C_r+o(1)) \alpha_r^{(\frac{1}{2}+o(1))(n-1)}.$$ By Theorem \ref{const}, there are trees having more $r$-matchings than a path when $r$ is large. Let $T_{a,b}$ denote the subdivided star obtained from $K_{1,b}$ by subdividing each edge $a-1$ times. In the following table, we indicate the actual example constructed in the same way as in Theorem \ref{const} with more precise choice of parameters.

\begin{center}
\begin{tabular}{c||c|c|c|c|c}
$r$ & $s$            & $\alpha_r$     & $\beta_r$ &  $a$  & $s_r(K_2,T_{a,(n-1)/a})^{\frac{1}{n-1}}$ \\ \hline
2 & $1.7182\ldots$ & $1.4446\ldots$ & $1.4655\ldots$ & 3 & $3^{1/3}=1.4422\ldots$ \\
3 & $2.1809\ldots$ & $1.3693\ldots$ & $1.3802\ldots$ & 5 & $4^{1/5}=1.3195\ldots$ \\
4 & $2.5911\ldots$ & $1.3210\ldots$ & $1.3247\ldots$ & 5 & $4^{1/5}=1.3195\ldots$  \\
5 & $2.9673\ldots$ & $1.2866\ldots$ & $1.2851\ldots$ & 6 & $4^{1/6} = 1.2599\ldots$  \\
6 & $3.3191\ldots$ & $1.2605\ldots$ & $1.2554\ldots$ & 6 & $4^{1/6} = 1.2599\ldots$   \\
7 & $3.6523\ldots$ & $1.2397\ldots$ & $1.2320\ldots$ & 8 & $5^{1/8} = 1.2228\ldots$ \\
8 & $3.9706\ldots$ & $1.2228\ldots$ & $1.2131\ldots$ & 8 & $5^{1/8} = 1.2228\ldots$ \\
9 & $4.2766\ldots$ & $1.2086\ldots$ & $1.1974\ldots$ & 10 & $6^{1/10} = 1.1962\ldots$ \\
10 & $4.5723\ldots$ & $1.1965\ldots$ & $1.1842\ldots$ & 10 & $6^{1/10} = 1.1962\ldots$\\
11 & $4.8592\ldots$ & $1.1861\ldots$ & $1.1729\ldots$ & 11 & $6^{1/11} = 1.1769\ldots$\\
\end{tabular}
\end{center}

\noindent For $r$ larger than $11$, $\beta_r < s_r(K_2,T_{a,n/a})^{\frac{1}{n-1}}$ holds since $\beta_r$ decrease faster than $s_r(K_2,T_{a,n/a})^{\frac{1}{n-1}}$. Hence, we can conclude that $$\beta_r < s_r(K_2,T_{a,n/a})^{\frac{1}{n-1}}$$  for all $r$ except $r\in \{1,2,3,4,5,7,9\}$. Thus, there are $n$-vertex trees with more $r$-matchings than $P_n$ for $r\notin\{1,2,3,4,5,7,9\}$. For $r=1,2$, such tree does not exist, and 
we do not know for $r \in \{3,4,5,7,9\}$. We leave this as an open question.

\begin{problem}\label{prob}
For $r\in \{3,4,5,7,9\}$, does the following hold?
$$\max_{T_n\in \mathbf{T}_n} s_2(K_2,T_n) = s_2(K_2,P_n)$$
\end{problem}

\end{document}